\newtheorem{rmk}{Remark}
\newtheorem{thm}{Theorem}
\newcommand*{\affaddr}[1]{#1} 
\newcommand*{\affmark}[1][*]{\textsuperscript{#1}}
\begin{document}

\title{Analysis of a spherical free boundary problem modelling granular biofilms}

\author[F.~Russo et al.]{F.~Russo\protect\affmark[1] \and M.R.~Mattei\affmark[1] \and A.~Tenore\affmark[1] \and B.~D'Acunto\affmark[1] \and V.~Luongo\affmark[1] \and L.~Frunzo\affmark[1]}

 \maketitle

{\footnotesize
\begin{center}
\affaddr{\affmark[1]Department of Mathematics and Applications "Renato Caccioppoli", University of Naples Federico II, Via Cintia 1, Montesantangelo, 80126, Naples, Italy}\\
Corresponding author: M.R.~Mattei,
\texttt{mariarosaria.mattei@unina.it}
\end{center}}

\begin{abstract}
A free boundary value problem related to the genesis of multispecies granular biofilms is presented. The granular biofilm is modelled as a spherical free boundary domain with radial symmetry. The proposed model is conceived in the framework of continuum mechanics and consists of: nonlinear hyperbolic PDEs which model the advective transport and growth of attached species that constitute the granular biofilm matrix; semilinear elliptic PDEs which govern the diffusive transport and conversion of nutrients; and semilinear elliptic PDEs describing the invasion phenomena and conversion of planktonic cells suspended in the surrounding environment. The evolution of the free boundary is governed by an ODE accounting for microbial growth, attachment, and detachment. By using the method of characteristics, the system of equations constituting the granular biofilm model is converted into an equivalent integral system. Existence and uniqueness of solutions are discussed and proved for the attachment regime using the fixed point theorem.
\end{abstract}

\maketitle

\section{Introduction} \label{n1}
\
In most natural and human environments, microorganisms do not live as pure cultures of dispersed single cells, but are frequently embedded in a self-produced matrix of extracellular polymeric substances ($EPS$), forming complex, dense and compact biofilms \cite{ flemming2010biofilm, flemming2016biofilms}. Many species from several trophic groups may coexist in such microbial consortia and interact with each other through synergistic and antagonistic activities. Differently from free-swimming planktonic cells, bacteria living in a biofilm benefit from interspecies cooperation, showing higher resistance capacities to toxic substances and antibiotics \cite{mattei2018continuum}. In nature, biofilms predominantly growth attached to living or solid suitable surfaces \cite{palmer2007bacterial}. Nevertheless, in some engineered systems microorganisms aggregation can occur due to the self-immobilization of cells into approximately spherical-shaped biofilms \cite{trego2020growth}. Such aggregates share the typical properties of conventional planar biofilms in terms of density and composition, but they have a spherical structure and are able to freely move within the surrounding liquid medium. Nevertheless, the geometry and free movement of granules limits external boundary layer resistances, promoting mass transfer of nutrients towards microorganisms \cite{baeten2019modelling}.

Biofilm formation is a dynamic and complex process, which consists of several stages resulting from physical (substrates transport, invasion, attachment, detachment, etc.) and biochemical (microbial growth, substrates conversion, etc.) factors \cite{mattei2018continuum}, both in the case of planar and granular biofilms. The process leading to the formation of granular biofilms is known as granulation. Specifically, the term \textit{de novo} granulation denotes the process initiated by individual microbial cells, differently from the case in which granulation proceeds from already formed granules. Pioneer species in planktonic form aggregate thanks to their self-immobilization abilities and attachment properties. Once attached, bacteria proliferate and form the first sessile microbial colony, which expands over time as result of the microbial metabolic activities. In this first stage, the biofilm is characterized by a high level of diversity, as several extremely heterogeneous microenvironments may form inside it. This favours invasion phenomena of new microbial species, which benefit from the biofilm protective environment. Thus, the biofilm is colonized by motile planktonic cells which penetrate the biofilm matrix and proliferate as a new sessile biomass, where ideal conditions for their metabolic activities occur. During the maturation stage, external shear forces, nutrients depletion and biomass decay lead to the detachment of biofilm clusters from the biofilm to the surrounding medium. Conversely, detachment can be initiated internally, causing the dispersion of individual planktonic cells.

The relevance of this topic in engineering, biological and industrial fields has stimulated a growing interest in biofilms modelling. Indeed, numerous modelling works on multispecies biofilms growth and formation have been proposed for the planar \cite{eberl2001new, emerenini2015mathematical, alpkvista2007multidimensional,nat1,nat2} and granular \cite{feldman2017modelling, odriozola2016modeling, volcke2010effect} case. Nevertheless, several significant aspects of biofilms growth are not exhaustively considered by existing models. Indeed, most models completely neglect the attachment process by pioneering species in the initial phase of the biofilm formation. Only the $1D$ biofilm model and the granular biofilm model proposed by D'Acunto et al. \cite{d2019free,d2021free} and Tenore et al. \cite{tenore2021multiscale} focus on the \textit{de novo} formation of biofilms, by considering a vanishing initial domain.

In this work, the granular biofilm model describing the \textit{de novo} granulation process of a multispecies granular biofilm in the framework of continuous models is discussed \cite{tenore2021multiscale}. Modelling the \textit{de novo} granulation process does not require the definition of any initial conditions, as the initial size of the biofilm domain is assigned to zero and the microbial species composition is modelled according to the environmental conditions. The granular biofilm is modelled as a spherical free boundary domain under the assumption of radial symmetry. Its formation and expansion are governed by microbial growth, invasion phenomena by planktonic cells, attachment and detachment processes. As highlighted by recent works \cite{d2019free,d2021free}, the presence of free cells suspended in the surrounding liquid medium play a prominent role in the biofilm formation and growth. Indeed, attachment of pioneer planktonic cells initiates the granulation process, while the stage of biofilm development and maturation is strongly affected by colonization process mediated by microbial species not initially present within the biofilm. Therefore, in this work attachment flux is modelled as a continuous deterministic process, which linearly depends on the concentrations of the planktonic species in the surrounding medium each of which is characterized by a specific attachment velocity \cite{d2019free,d2021free}. Instead, invasion process is modelled by considering an additional reaction term, which depends on the concentration of planktonic species within the biofilm \cite{d2015modeling}. Specifically, the model considers two state variables representing the planktonic and sessile phenotypes and the conversion from the first to the second during the biofilm granulation process. Furthermore, a qualitative analysis of the spherical free boundary problem for the initial phase of multispecies granular biofilm growth is here presented for the first time following the approach proposed in D'Acunto et al. \cite{d2019free,d2021free}. 
 
The work is organized as follows. Section \ref{n2} discusses the spherical free boundary value problem by presenting all assumptions, variables, equations, and initial and boundary
conditions. The spherical free boundary evolution represented by the granular biofilm is governed by a first order differential equation. The growth of the attached species is governed by non linear hyperbolic partial differential equations. During the first instants of the biofilm formation, the free boundary velocity is greater than the characteristic velocity of such hyperbolic system, and, consequently, it is a space-like line \cite{d2019free, d2021free}. The initial-boundary conditions for the microbial concentrations are assigned on this line, and they are equal to the microbial species relative abundances in the biomass which attach on the granule-bulk liquid interface. The free boundary value problem is completed by two systems of semi linear elliptic partial differential equations that governs the quasi-static diffusion of substrates and planktonic species within the biofilm. Moreover, in Section \ref{n2} it is proved that equation describing the growth and transport of sessile biomass holds true for $r=0$. The complete integral version of the original differential free boundary problem is derived in Section \ref{n3} by introducing the method of characteristics. An existence and uniqueness theorem of solutions is shown in Section \ref{n4} in the class of continuous functions. Finally, the conclusions of the work are outlined in Section \ref{n5}.

\section{Modelling de novo granulation} \label{n2}
\
In this Section, the mathematical model introduced by Tenore et al. \cite{tenore2021multiscale} describing the genesis and growth process of a granular biofilm is discussed. The model has been conceived in the framework of continuum biofilm models \cite{wanner1986multispecies} and accounts for all the main processes involved in the biofilm lifecycle: microbial growth, substrates conversion, microbial invasion, attachment and  detachment.

\subsection{Equations} \label{n2.1}
\
Under the assumption of radial symmetry, the granular biofilm is modelled as a spherical free boundary domain. The granular biofilm center is located at $r=0$, where $r$ denotes the radial coordinate. The granular biofilm model takes into account the dynamics of three components, expressed in terms of concentration: $n$ microbial species in sessile form $X_i(r,t)$; $n$ microbial species in planktonic form $\Psi_i(r,t)$; and $m$ dissolved substrates $S_j(r,t)$. All variables involved in the biofilm modelling are considered as functions of time $t$ and space $r$. The density of the microbial species is denoted by $\rho_i$. By dividing the sessile species concentration $X_i(r,t)$ by $\rho_i$, biomass volume fractions $f_i(r,t)$ are achieved. Furthermore, $f_i$ are constrained to add up to unity at each location and time: $\sum_{i=1}^n f_i = 1$ \cite{rahman2015mixed}. In summary, the granular biofilm model components are:
\begin{equation} \label{2.1}
X_i, \ i=1,...,n, \ {\bf X}=(X_1,...,X_n),
\end{equation}

\begin{equation} \label{2.2}
f_i=\frac{X_i}{\rho_i}, \ i=1,...,n, \ {\bf f}=(f_1,...,f_n),
\end{equation}

\begin{equation} \label{2.3}
\Psi_i, \ i=1,...,n, \ \mbox{\boldmath $\Psi$}=(\Psi_1,...,\Psi_n),
\end{equation}

\begin{equation} \label{2.4}
S_j, \ j=1,...,m, \ {\bf S}=(S_1,...,S_m).
\end{equation}

Planktonic cells are supposed to have a negligible spatial occupancy due to the small particle size.

 The growth and transport of the $i^{th}$ sessile species across the granular biofilm is governed by the following system of non linear hyperbolic partial differential equations:
\begin{equation}
\begin{aligned}                                        \label{2.5}
 \frac{\partial X_i(r,t)}{\partial t} +\frac{1}{r^2} \frac{\partial}{\partial r}(r^2 u(r,t) X_i(r,t))
			 = \rho_i r_{M,i} + \rho_i r_i,  \ i=1,...,n, \ 0 \leq r \leq R(t), \ t>0,
\end{aligned}			 
\end{equation}
where $r_{M,i}$ and $r_{i}$ are the specific growth rates due to sessile and planktonic species, respectively; and $u(r,t)$ is the biomass velocity.

The function $r_{M,i}$ depends on sessile species $X_i$ and substrates $S_j$, while the function $r_{i}$ depends on planktonic species $\Psi_i$ and substrates $S_j$:
\begin{equation}                                        \label{2.7}
r_{M,i} = r_{M,i}({\bf X}(r,t), {\bf S}(r,t)),
\end{equation}

\begin{equation}                                        \label{2.8}
r_{i} = r_{i}(\mbox{\boldmath $\Psi$}(r,t), {\bf S}(r,t)).
\end{equation}

$u(r,t)$ is governed by the following equation:
\begin{equation}                                        \label{2.6}	
\frac{\partial u(r,t)}{\partial r} = -\frac{2 u(r,t)}{r} + G({\bf X}(r,t),{\bf S}(r,t),\mbox{\boldmath $\Psi$}(r,t)) , \ 0 < r \leq R(t), \ t>0,
\end{equation}
where $G({\bf X}(r,t),{\bf S}(r,t),\mbox{\boldmath $\Psi$}(r,t))= \sum_{i=1}^{n}(r_{M,i}+r_i)$.

Eq. \eqref{2.5} can be rewritten in terms of volume fractions $f_i$. By diving Eq. \eqref{2.5} by $\rho_i$ yields: 
 
\begin{equation}                                        \label{2.9}
\begin{aligned}
\frac{\partial f_i(r,t)}{\partial t} +\frac{1}{r^2} \frac{\partial}{\partial r}(r^2 u(r,t) f_i(r,t))=r_{M,i}+r_i, \ i=1,...,n, \ 0 \leq r \leq R(t), \ t>0.
\end{aligned}
\end{equation}	

The diffusion and conversion of soluble substrates and planktonic cells within the granular biofilm are governed by semi linear parabolic partial differential equations that are usually considered in quasi-static conditions \cite{d2019free, d2021free}:
\begin{equation}                                        \label{2.10}
\begin{aligned}
-D_{S,j}&\frac{1}{r^2}\frac{\partial }{\partial r}\bigg(r^2 \frac{\partial S_j(r,t)}{\partial r}\bigg)= r_{S,j}({{\bf X}(r,t)},{{\bf S}(r,t)}), \ j=1,...,m,  \ 0 < r < R(t), \ t>0,
\end{aligned}
\end{equation}

\begin{equation}                                        \label{2.11}
\begin{aligned}
-D_{\Psi,i}&\frac{1}{r^2}\frac{\partial }{\partial r}\bigg(r^2 \frac{\partial \Psi_i(r,t)}{\partial r}\bigg)=
r_{\Psi,i}({\mbox{\boldmath $\Psi$}(r,t)},{{\bf S}}(r,t)), \ i=1,...,n,  \ 0 < r < R(t), \ t>0,
\end{aligned}
 \end{equation}
where $r_{S,j}$ denotes the conversion rate of the $j^{th}$ dissolved substrate; $r_{\Psi,j}$ indicates the conversion rate due to the switch of the mode of growth from planktonic to sessile; $D_{S,i}$ and $D_{\Psi,i}$ are the diffusivity coefficients of the substrates and the planktonic species within the biofilm, respectively.

The biofilm radius $R(t)$ represents the free boundary of the mathematical problem. Its evolution depends on processes of microbial growth and phenomena which occurs on the surface of the granule. Attachment phenomena dominate the initial phase of the granulation process, while detachment phenomena become predominant as the granule size increases. The variation of $R(t)$ is governed by the following ordinary differential equation \cite{d2019free, d2021free, tenore2021multiscale}:
\begin{equation}                                       \label{2.12}
\dot R(t)
    = u(R(t),t) + \sigma_a(t)-\sigma_d(t),
\end{equation}
where $\sigma_a$ denotes the attachment velocity of biomass from bulk liquid to biofilm and $\sigma_d$ denotes the detachment velocity of biomass from biofilm to bulk liquid. The function $\sigma_a$ linearly depends on the concentrations of the microbial species in planktonic form $\Psi^*_i$, $i = 1, ..., n$, $\mbox{\boldmath $\Psi$}^* = (\Psi^*_1, ..., \Psi^*_n)$, suspended in the bulk liquid \cite{d2019free, d2021free,wanner1986multispecies, mavsic2014modeling}, each of which is characterized by a specific attachment velocity $v_{a,i}$:
\begin{equation}
\label{4.2.9}
    \sigma_a(t)=\sum_{i=1}^{n} \sigma_{a,i}(t)=\frac{\sum_{i=1}^{n}v_{a,i}\Psi^*_i(t)}{\rho_i},
\end{equation}
where attachment velocities $v_{a,i}$ can be assumed as constants or can be considered as functions of the environmental conditions affecting biofilm growth (i.e. substrates concentrations, biofilm composition, electrostatic and mechanical properties of the surface, etc.)

Meanwhile, the function $\sigma_d$ is modelled as a quadratic function of the granule radius $R(t)$ by following the same approach adopted for the planar case \cite{abbas2012longtime}: 
\begin{equation}                                        \label{2.13}
\sigma_d(t)=\delta R^2(t),
\end{equation}  
where $\delta$ is the detachment coefficient and is supposed to be equal for all microbial species. 

In the initial phase of biofilm formation, the attachment process prevails and the detachment process is very small, since so is $R^2$. Therefore, as shown by D'Acunto et al. \cite{d2021free} in this circumstances it is $\sigma_a - \sigma_d > 0$ and the free boundary velocity is greater than the characteristic velocity ($\dot R(t)>u(R,t)$). Thus, the spherical free boundary is a space-like line. For mature biofilms the spherical free boundary $R$ becomes greater, the detachment is the prevailing process ($\sigma_a - \sigma_d < 0$) and the free boundary is a time-like line.

\subsection{Initial-boundary conditions} \label{n2.2}
\
The following initial-boundary conditions are associated to the systems of PDEs \eqref{2.5}, \eqref{2.6}, \eqref{2.10} - \eqref{2.12}:
\begin{equation}                                        \label{2.2.5}
X_i(R(t),t)=X_{i,0}(t), \ i=1,...,n, \ t>0,
\end{equation}

\begin{equation}                                        \label{2.2.6}
u(0,t)=0, \ t>0,
\end{equation}

\begin{equation}                                        \label{2.2.10}
\frac{\partial S_j}{\partial r}(0,t)=0,\ S_j(R(t),t)=S^*_j(t),\ j=1,...,m, \ t>0,
\end{equation}

\begin{equation}                                        \label{2.2.11}
\frac{\partial \Psi_i}{\partial r}(0,t)=0,\ \Psi_i(R(t),t))=\Psi^*_i(t),\ i=1,...,n, \ t>0,
\end{equation}

\begin{equation}                                        \label{2.2.12}
R(0)=0.
\end{equation}
 
In Eq. \eqref{2.2.5}, $X_{i,0}(t)$ is the relative abundance of the $i^{th}$ species in the biomass attached to the granule-bulk liquid interface \cite{d2021free}. According to D'Acunto et al. \cite{d2021free}, $X_{i,0}(t)$ can be evaluated as follows:

\begin{equation}                                        \label{2.2.13}  
X_{i,0}(t) = \frac{v_{a,i}\Psi^*_i(t)}{\sum_{i=1}^{n}v_{a,i}\Psi^*_i(t)} \ \rho_i, \ i=1,...,n, \ t>0.
\end{equation} 

It means that the microbial species concentration at the granule-bulk liquid interface $X_i(R(t),t)$, for a multispecies granular biofilm under attachment regime, depends on the concentrations of the species in planktonic form present in the bulk liquid and its attachment properties. Note that, when all microbial species in the bulk liquid are characterized by the same attachment velocity, Eq. \eqref{2.2.13} reduces to:

\begin{equation}                                        \label{2.2.14}  
\frac{X_{i,0}(t)}{\rho_i} = \frac{\Psi^*_i(t)}{\sum_{i=1}^{n}\Psi^*_i(t)}, \ i=1,...,n, \ t>0,
\end{equation}
that is the volume fraction of the $i^{th}$ microbial species at the granule-bulk liquid interface assumes the same value of the volume fraction within the bulk liquid. As mentioned in Section \ref{n2.1}, in the initial phase of the granulation process, the free boundary is a space-like line ($\sigma_a>\sigma_d$), and the condition \eqref{2.2.5} is required. When the granule grows over time, the free boundary becomes a time-like line ($\sigma_a<\sigma_d$) and such condition is not required. The boundary condition \eqref{2.2.6} indicates that there is no biomass flux at the granule center ($r = 0$). Regarding the diffusion of substrates and planktonic species, the boundary conditions \eqref{2.2.10}$_1$ and \eqref{2.2.11}$_1$ are the no flux conditions at the granule center ($r = 0$). While, the boundary conditions \eqref{2.2.10}$_2$ and \eqref{2.2.11}$_2$ are Dirichlet conditions, which state that the values of the substrates and planktonic species on the free boundary are the same as in the bulk liquid. The functions $S^*_j(t)$ and $\Psi^*_j(t)$ are prescribed functions in general. To model the genesis of the granular biofilm (\textit{de novo} granulation) a vanishing initial domain is considered. For this reason, the initial condition \eqref{2.2.12} is coupled to Eq. \eqref{2.12}. It reproduces the case in which whereby only planktonic biomass is supposed to be present in the system at $t=0$. Note that Eq. \eqref{2.5} refers to the biofilm domain and does not require an initial condition, as the extension of the biofilm domain is zero at $t = 0$.

\subsection{Modelling the initial phase of multispecies granular biofilm growth} \label{n2.3}
\
In summary, the initial phase of multispecies granular biofilm growth is governed by the following spherical free boundary problem:

\begin{equation}              \label{2.3.1}
\begin{aligned}
& \frac{\partial X_i(r,t)}{\partial t} +\frac{1}{r^2} \frac{\partial}{\partial r}(r^2 u(r,t) X_i(r,t))
			 = \rho_i r_{M,i}({\bf X}(r,t), {\bf S}(r,t)) + \rho_i r_i(\mbox{\boldmath $\Psi$}(r,t), {\bf S}(r,t)), \\
& \ i=1,...,n, \ 0 \leq r \leq R(t), \ t>0,
\end{aligned}
\end{equation}	

\begin{equation}              \label{2.3.2}
X_i(R(t),t)=X_{i,0}(t), \ i=1,...,n, \ t>0,
\end{equation}

\begin{equation}              \label{2.3.3}
\dot R(t) = u(R(t),t) + \sigma_a(\mbox{\boldmath $\Psi$}^*), \ \sigma_a>0, \ t>0,
\end{equation}

\begin{equation}              \label{2.3.4}
R(0)=0,
\end{equation}

\begin{equation}              \label{2.3.5}		
\frac{\partial u(r,t)}{\partial r} = -\frac{2 u(r,t)}{r} +G({\bf X}(r,t),{\bf S}(r,t),\mbox{\boldmath $\Psi$}(r,t)) , \ 0 < r \leq R(t), \ t>0,
\end{equation}

\begin{equation}              \label{2.3.6}		
 u(0,t)=0,
\end{equation}

\begin{equation}              \label{2.3.7}
\begin{aligned}
-D_{S,j}\frac{1}{r^2}\frac{\partial }{\partial r}\bigg(r^2 \frac{\partial S_j(r,t)}{\partial r}\bigg)=
r_{S,j}({{\bf X}(r,t)},{{\bf S}(r,t)}), \ 0 < r < R(t),\ t>0, \ j=1,...,m,
\end{aligned}
\end{equation}

\begin{equation}              \label{2.3.8}
\frac{\partial S_j}{\partial r}(0,t)=0,\ S_j(R(t),t)=S^*_j(t),\ t>0, \ j=1,...,m,
\end{equation}

\begin{equation}              \label{2.3.9}
\begin{aligned}-D_{\Psi,i}\frac{1}{r^2}\frac{\partial }{\partial r}\bigg(r^2 \frac{\partial \Psi_i(r,t)}{\partial r}\bigg)=
r_{\Psi,i}({\mbox{\boldmath $\Psi$}(r,t)},{{\bf S}}(r,t)), \ 0 < r < R(t), \ t>0, \ \ i=1,...,n,
\end{aligned}
 \end{equation}

\begin{equation}               \label{2.3.10}
\frac{\partial \Psi_i}{\partial r}(0,t)=0,\ \Psi_i(R(t),t)=\Psi^*_i(t), \ t>0, \ i=1,...,n.
\end{equation}
 
Note that Eq. \eqref{2.3.3} refers to the initial phase of biofilm formation, when the detachment flux $\sigma_d$ is negligible compared to $\sigma_a$. The spherical free boundary $R(t)$ is a space-like line and Eq. \eqref{2.3.2} provides the initial conditions for the microbial species in sessile form on the free boundary.

\begin{rmk}\label{rmk}
 Eq. \eqref{2.3.1} has an apparent singularity for $r=0$. In this Section, it is proved that Eq. \eqref{2.3.1} holds also for $r=0$, as the singularity may be eliminated.
 
 Eq. \eqref{2.3.1} can be rewritten as:
\begin{equation}       \label{2.3.1.1}
\begin{aligned}
\frac{\partial X_i(r,t)}{\partial t} + X_i(r,t) \frac{\partial u(r,t)}{\partial r} + \frac{2 u(r,t) X_i(r,t)}{r} + u(r,t) \frac{\partial X_i(r,t)}{\partial r}= \rho_i r_{M,i} + \rho_i r_i, \ i=1,...,n.
\end{aligned}
\end{equation}
 
Consider the Taylor's series expansion of $u(r,t)$ for $r=0$:
\begin{equation}        \label{2.3.1.2}
u(r,t)= u(0,t)+\frac{\partial u(0,t)}{\partial r} r + ...
\end{equation}
 
Taking into account the boundary condition \eqref{2.3.6}, it follows:
\begin{equation}        \label{2.3.1.3}
\lim_{r\to 0} \frac{u(r,t)}{r} = \frac{\partial u(0,t)}{\partial r}.
\end{equation}
 
By considering Eq. \eqref{2.3.1.3}, Eqs. \eqref{2.3.5} and \eqref{2.3.1.1} for $r=0$ may be replaced by:
 
\begin{equation}        \label{2.3.1.4}		
\frac{\partial u(0,t)}{\partial r} = \frac{G({\bf X}(0,t),{\bf S}(0,t),\mbox{\boldmath $\Psi$}(0,t))}{3},
\end{equation}
	
\begin{equation}        \label{2.3.1.5} 
			\frac{\partial X_i(0,t)}{\partial t} + 3 X_i(0,t) \frac{\partial u(0,t)}{\partial r}=\rho_i r_{M,i} + \rho_i r_i.
\end{equation}	

Substituting Eq. \eqref{2.3.1.4} in Eq. \eqref{2.3.1.5} yields:

\begin{equation}        \label{2.3.1.6} 
\frac{\partial X_i(0,t)}{\partial t}
			 =\rho_i r_{M,i} + \rho_i r_i - X_i(0,t)G({\bf X}(0,t),{\bf S}(0,t),\mbox{\boldmath $\Psi$}(0,t)).
\end{equation}	

Eq. \eqref{2.3.1.6} replaces Eq. \eqref{2.3.1.1} for $r=0$ and confirms that Eq. \eqref{2.3.1} holds also for $r=0$.

\end{rmk}

\section{Integral system} \label{n3}
\
The system of equations \eqref{2.3.1}, \eqref{2.3.3}, \eqref{2.3.5}, \eqref{2.3.7}, and \eqref{2.3.9}, introduced in the previous section are here converted to an integral system by using the method of characteristics \cite{d2019free, d2021free,coclite1}. Consider the characteristic-like lines $r = r(t)$ of system \eqref{2.3.1}, defined by the differential equation:
\begin{equation}      \label{3.1} 
\frac{\partial r(t)}{\partial t}= u(r(t),t).
\end{equation}

Since they also depend on the starting point $t_0$, the notation $r(t) = c(t_0,t)$ can be used. Therefore, more precisely, the characteristics are defined by the following initial value problem:
\begin{equation}      \label{3.2}
\frac{\partial c}{\partial t}(t_0,t)
    =u(c(t_0,t),t),\ \ c(t_0,t_0)=R(t_0).
\end{equation}

In particular, for $t_0 = 0$ yields:
\begin{equation}      \label{3.3}
\frac{\partial c}{\partial t}(0,t)
    =u(c(0,t),t),\ \ c(0,0)=0,
\end{equation}
since $R(0) = 0$. The initial value problem \eqref{3.3} admits the solution $c(0, t) = 0$ because of condition \eqref{2.3.6}.

In characteristic coordinates, the integral form of Eq. \eqref{2.3.5} can be written as:
\begin{equation}     \label{3.4}
\begin{aligned}
u(c(t_0,t),t)=\frac{1}{c^2(t_0,t)} \int_{0}^{t_0} c^2(\tau,t) G({\bf X}(c(&\tau,t),t),{\bf S}(c(\tau,t),t),\mbox{\boldmath $\Psi$}(c(\tau,t),t)) \frac{\partial }{\partial \tau} c(\tau,t) d\tau.
\end{aligned}
\end{equation}

Consider Eq. \eqref{2.3.3} written for $t = t_0$: 
\begin{equation}      \label{3.5}
\dot R(t_0)= \sigma_a(\mbox{\boldmath $\Psi$}^*(t_0))+ u(R(t_0),t_0).
\end{equation}

Substituting Eq. \eqref{3.4} in Eq. \eqref{3.5} yields:
\begin{equation}       \label{3.6}
\begin{aligned}
\dot R(t_0)&= \sigma_a(\mbox{\boldmath $\Psi$}^*(t_0))\\
&+ \frac{1}{c^2(t_0,t_0)}\int_{0}^{t_0} c^2(\tau,t_0) G({\bf X}(c(\tau,t_0),t_0),{\bf S}(c(\tau,t_0),t_0),\mbox{\boldmath $\Psi$}(c(\tau,t_0),t_0)) \frac{\partial }{\partial \tau} c(\tau,t_0) d\tau,
\end{aligned}
 \end{equation}
since $u(R(t_0),t_0) = u(c(t_0,t_0),t_0)$.

After, integrating over $(0,t_0)$ it follows:
\begin{equation}       \label{3.7}
\begin{aligned}
R(t_0) &= \int_{0}^{t_0} \sigma_a(\mbox{\boldmath $\Psi$}^*(\theta)) d\theta \\
&+ \int_{0}^{t_0} \frac{1}{c^2(\theta,\theta)} d\theta \int_{0}^{\theta} c^2(\tau,\theta) G({\bf X}(c(\tau,\theta),\theta),{\bf S}(c(\tau,\theta),\theta),\mbox{\boldmath $\Psi$}(c(\tau,\theta),\theta))\frac{\partial }{\partial \tau} c(\tau,\theta) d\tau,
\end{aligned}
\end{equation}
where the condition \eqref{2.3.4} has been used. Equation \eqref{3.7} is the desired integral equation for $R$ in characteristic coordinates. 

Substituting Eq. \eqref{3.4} in Eq. \eqref{3.2}, it follows:
\begin{equation}        \label{3.8}
\begin{aligned}
\frac{\partial }{\partial t} c(t_0,t)=\frac{1}{c^2(t_0,t)}\int_{0}^{t_0} c^2(\tau,t) G({\bf X}(c(\tau,t),t),{\bf S}(c(\tau,t),t),\mbox{\boldmath $\Psi$}(c(\tau,t),t)) \frac{\partial c}{\partial \tau} (\tau,t) d\tau.
\end{aligned}
\end{equation}
 
Integrating over $(t_0,t)$:
\begin{equation}         \label{3.9}
\begin{aligned}
c(t_0,t)&- c(t_0,t_0) \\
&= \int_{t_0}^{t} \frac{1}{c^2(t_0,\theta)}d\theta \int_{0}^{t_0} c^2(\tau,\theta) G({\bf X}(c(\tau,\theta),\theta),{\bf S}(c(\tau,\theta),\theta),\mbox{\boldmath $\Psi$}(c(\tau,\theta),\theta)) \frac{\partial }{\partial \tau} c(\tau,\theta) d\tau,
\end{aligned}
\end{equation}
\begin{equation}         \label{3.10}
\begin{aligned}
 c(t_0,t) &= R(t_0)\\
& + \int_{t_0}^{t} \frac{1}{c^2(t_0,\theta)} d\theta \int_{0}^{t_0} c^2(\tau,\theta) G({\bf X}(c(\tau,\theta),\theta),{\bf S}(c(\tau,\theta),\theta),\mbox{\boldmath $\Psi$}(c(\tau,\theta),\theta)) \frac{\partial }{\partial \tau} c(\tau,\theta) d\tau.
 \end{aligned}
\end{equation}

Lastly, substituting expression \eqref{3.7} of $R(t_0)$ in Eq. \eqref{3.10} yields:
 \begin{equation}         \label{3.11}
\begin{aligned}
c(t_0,t) &= \int_{0}^{t_0} \sigma_a(\mbox{\boldmath $\Psi$}^*(\theta))d\theta \\
&+ \int_{0}^{t_0} \frac{1}{c^2(\theta,\theta)} d\theta \int_{0}^{\theta} c^2(\tau,\theta) G({\bf X}(c(\tau,\theta),\theta),{\bf S}(c(\tau,\theta),\theta),\mbox{\boldmath $\Psi$}(c(\tau,\theta),\theta)) \frac{\partial c}{\partial \tau} (\tau,\theta) d\tau \\
&+ \int_{t_0}^{t} \frac{1}{c^2(t_0,\theta)} d\theta \int_{0}^{t_0} c^2(\tau,\theta)  G({\bf X}(c(\tau,\theta),\theta),{\bf S}(c(\tau,\theta),\theta),\mbox{\boldmath $\Psi$}(c(\tau,\theta),\theta)) \frac{\partial c}{\partial \tau} (\tau,\theta) d\tau.
\end{aligned}
\end{equation}
   
Equation \eqref{3.11} is the desired integral equation for the characteristics. 
The integral equation for $R(t_0)$ in characteristic coordinates (Eq. \eqref{3.7}) can also be evaluated by considering Eq. \eqref{3.11} for $t=t_0$. 

In addition, the integral equation for $\partial c$/$\partial t_0$ can be obtained from Eq. \eqref{3.11} as follows: 

\begin{equation}         \label{3.12}
\begin{aligned}
\frac{\partial }{\partial t_0} c(t_0,t) = \sigma_a(\mbox{\boldmath $\Psi$}^*(t_0))
+ \int_{t_0}^{t} G({\bf X}(c(t_0,\theta),\theta),{\bf S}(c(t_0,\theta),\theta),\mbox{\boldmath $\Psi$}(c(t_0,\theta),\theta)) \frac{\partial }{\partial t_0} c(t_0,\theta) d\theta.
\end{aligned}
 \end{equation}
 
Eq. \eqref{3.12} is needed, as $\partial c$/$\partial t_0$ appears in the integral equations \eqref{3.7} - \eqref{3.11}.

Consider Eq. \eqref{2.3.1} written in characteristic coordinates:
\begin{equation}        \label{3.13}
\begin{aligned}
\frac{dX_i}{dt}(c(t_0,t),t)&= \rho_ir_{M,i}({\bf X}(c(t_0,t),t),{\bf S}(c(t_0,t),t))+\rho_ir_{i}(\mbox{\boldmath $\Psi$}(c(t_0,t),t),{\bf S}(c(t_0,t),t))\\
&-X_i(c(t_0,t),t)G({\bf X}(c(t_0,t),t),{\bf S}(c(t_0,t),t),\mbox{\boldmath $\Psi$}(c(t_0,t),t)), \ i = 1,..., n,
\end{aligned}
\end{equation}
where Eq. \eqref{2.3.5} has been used. The initial conditions for $t = t_0$ are derived from \eqref{2.3.2}:
\begin{equation}        \label{3.14}
X_i(c(t_0,t_0),t_0) = X_i(R(t_0),t_0) = X_{i,0}(t_0), \ i = 1,..., n.
\end{equation}

From \eqref{3.13}, it follows:
\begin{equation}        \label{3.15}
\begin{aligned}
\frac{dX_i}{dt}(c(t_0,t),t) = F_i({\bf X}(c(t_0,t),t),{\bf S}(c(t_0,t),t),\mbox{\boldmath $\Psi$}(c(t_0,t),t)), \ i = 1,..., n,
\end{aligned}
\end{equation}
where:
\begin{equation}        \label{3.16}
F_i=\rho_ir_{M,i}+\rho_ir_{i}-X_iG, \ i = 1,..., n.
\end{equation}

Integrating Eq. \eqref{3.15} over $(t_0,t)$ yields:
\begin{equation}        \label{3.17}
\begin{aligned}
& X_i(c(t_0,t),t)=X_{i,0}(t_0)
+\int_{t_0}^t F_i({\bf X}(c(t_0,\tau),\tau),{\bf S}(c(t_0,\tau),\tau),\mbox{\boldmath $\Psi$}(c(t_0,\tau),\tau))d\tau,\\
& \ 0\leq t_0< t \leq T, \ i=1,...,n,
\end{aligned}
\end{equation}
where the condition \eqref{3.14} has been used. The equation above is the desired integral equation for $X_i$ in characteristic coordinates. 

Consider Eq. \eqref{2.3.7} rewritten in characteristic coordinates:
\begin{equation}        \label{3.18}
\begin{aligned}
D_{S,j}\frac{1}{c^2(t_0,t)}\frac{\partial}{\partial r}\bigg(c^2(t_0,t)\frac{\partial S_j}{\partial r}(c(t_0,t),t)\bigg) =-r_{S,j}({{\bf X}(c(t_0,t),t)},{{\bf S}}(c(t_0,t),t)),\ j=1,...,m.
\end{aligned}
\end{equation}

The boundary conditions \eqref{2.3.8} for \eqref{3.18} assume the following expressions in characteristic coordinates:
\begin{equation}        \label{3.19}
\frac{\partial S_j}{\partial r}(0,t)=\frac{\partial S_j}{\partial r}(c(0,t),t)=0, \ S_j(R(t),t)=S_j(c(t,t),t))=S^*_j(t),
\end{equation}
because of \eqref{3.2} and \eqref{3.3}. From \eqref{3.18}:
\begin{equation}        \label{3.20}
\begin{aligned}
D_{S,j}\frac{\partial}{\partial t_0}\bigg(c^2(t_0,t)\frac{\partial S_j}{\partial r}(c(t_0,t),t)\bigg)
=-c^2(t_0,t) r_{S,j}({{\bf X}(c(t_0,t),t)},{{\bf S}}(c(t_0,t),t))\frac{\partial }{\partial t_0} c(t_0,t).
\end{aligned}
\end{equation}

Integrating the equation above over $(0, t_0)$ yields:
\begin{equation}         \label{3.21}
\begin{aligned}
D_{S,j}c^2(t_0,t)\frac{\partial S_j}{\partial r}(c(t_0,t),t)=-\int_{0}^{t_0} c^2(\tau,t) r_{S,j}({{\bf X}(c(\tau,t),t)},{{\bf S}}(c(\tau,t),t))\frac{\partial }{\partial \tau} c(\tau,t)d\tau,
\end{aligned}
\end{equation}
where the boundary condition \eqref{3.19}$_1$ has been used. From \eqref{3.21}:
\begin{equation}         \label{3.22}
\begin{aligned}
&D_{S,j}\frac{\partial }{\partial t_0}S_j(c(t_0,t),t)
\\
&=-\frac{1}{c^2(t_0,t)}\frac{\partial }{\partial t_0} c(t_0,t)\int_0^{t_0} c^2(\tau,t)r_{S,j}({{\bf X}(c(\tau,t),t)},{{\bf S}}(c(\tau,t),t))\frac{\partial }{\partial \tau} c(\tau,t)d\tau.
\end{aligned}
\end{equation}

Integrating the equation above over $(t_0, t)$ yields:
\begin{equation}        \label{3.23}
\begin{aligned}
&D_{S,j}S_j(c(t_0,t),t) =D_{S,j}S^*_j(t)\\
&+\int_{t_0}^t \frac{1}{c^2(\theta,t)}\frac{\partial }{\partial \theta} c(\theta,t)d\theta\int_{0}^{\theta} c^2(\tau,t)r_{S,j}({\bf X}(c(\tau,t),t),{\bf S}(c(\tau,t),t))\frac{\partial }{\partial \tau} c(\tau,t)d\tau,\\
&\ 0\leq t_0<t \leq T, \ j=1,...,m,
\end{aligned}
\end{equation}
where the boundary condition \eqref{3.19}$_2$ has been used. The equation above is the desired integral equation for $S_j$ in characteristic coordinates. 

Similarly, consider Eq. \eqref{2.3.9} rewritten in characteristic coordinates:
\begin{equation}        \label{3.24}
\begin{aligned}
D_{\Psi,j}\frac{1}{c^2(t_0,t)}\frac{\partial}{\partial r}\bigg(c^2(t_0,t)\frac{\partial \Psi_i}{\partial r}(c(t_0,t),t)\bigg) =- r_{\Psi,i}({\mbox{\boldmath $\Psi$}(c(t_0,t),t)},{{\bf S}}(c(t_0,t),t)),\ i=1,...,n.
\end{aligned}
\end{equation}

The boundary conditions \eqref{2.3.10} for \eqref{3.24} assume the following expression in characteristic coordinates:
\begin{equation}        \label{3.25}
\frac{\partial \Psi_i}{\partial r}(0,t)=\frac{\partial \Psi_i}{\partial r}(c(0,t),t)=0, \ \Psi_i(R(t),t)=\Psi_i(c(t,t),t))=\Psi^*_i(t),
\end{equation}
because of \eqref{3.2} and \eqref{3.3}. From \eqref{3.24}:
\begin{equation}        \label{3.26}
\begin{aligned}
D_{\Psi,j}\frac{\partial}{\partial t_0}\bigg(c^2(t_0,t)\frac{\partial \Psi_i}{\partial r}(c(t_0,t),t)\bigg)
=-c^2(t_0,t) r_{\Psi,i} ({\mbox{\boldmath $\Psi$}(c(t_0,t),t)},{{\bf S}}(c(t_0,t),t))\frac{\partial }{\partial t_0} c(t_0,t).
\end{aligned}
\end{equation}

Integrating the equation above over $(0, t_0)$ yields:
\begin{equation}        \label{3.27}
\begin{aligned}
D_{\Psi,i}c^2(t_0,t)\frac{\partial \Psi_i}{\partial r}(c(t_0,t),t) 
=-\int_{0}^{t_0} c^2(\tau,t) r_{\Psi,i}({\mbox{\boldmath $\Psi$}(c(\tau,t),t)},{{\bf S}}(c(\tau,t),t))\frac{\partial }{\partial \tau} c(\tau,t)d\tau,
\end{aligned}
\end{equation}
where the boundary condition \eqref{3.25}$_1$ has been used. From \eqref{3.27}:
\begin{equation}        \label{3.28}
\begin{aligned}
&D_{\Psi,i}\frac{\partial }{\partial t_0}\Psi_i(c(t_0,t),t) 
\\
&=-\frac{1}{c^2(t_0,t)} \frac{\partial }{\partial t_0} c(t_0,t)\int_0^{t_0} c^2(\tau,t) r_{\Psi,i}({\mbox{\boldmath $\Psi$}(c(\tau,t),t)},{{\bf S}}(c(\tau,t),t))\frac{\partial }{\partial \tau} c(\tau,t)d\tau.
\end{aligned}
\end{equation}

Integrating the equation above over $(t_0, t)$ yields:
\begin{equation}         \label{3.29}
\begin{aligned}
&D_{\Psi,i}\Psi_i(c(t_0,t),t) =D_{\Psi,i}\Psi^*_i(t)
\\
&+\int_{t_0}^t \frac{1}{c^2(\theta,t)} \frac{\partial }{\partial \theta} c(\theta,t)d\theta\int_{0}^{\theta} c^2(\tau,t) r_{\Psi,i}(\mbox{\boldmath $\Psi$}(c(\tau,t),t),{\bf S}(c(\tau,t),t))\frac{\partial }{\partial \tau} c(\tau,t)d\tau,\\
&\ 0\leq t_0<t \leq T, \ i=1,...,n,
\end{aligned}
\end{equation}
where the boundary condition \eqref{3.25}$_2$ has been used. The equation above is the desired integral equation for $\Psi_i$ in characteristic coordinates. 

Note that an integral equation for $R(t_0)$ and $c(t_0,t)$ can be obtained by considering the following reasonings as well. 

The integral form of Eq. \eqref{3.6} can be written as follows:
\begin{equation}       \label{3.6.1}
\begin{aligned}
R^2(t_0)\dot R(t_0)&= R^2(t_0)\sigma_a(\mbox{\boldmath $\Psi$}^*(t_0))\\
&+ \int_{0}^{t_0} c^2(\tau,t_0)G({\bf X}(c(\tau,t_0),t_0),{\bf S}(c(\tau,t_0),t_0),\mbox{\boldmath $\Psi$}(c(\tau,t_0),t_0)) \frac{\partial }{\partial \tau} c(\tau,t_0) d\tau,
\end{aligned}
 \end{equation}
where the condition \eqref{3.2}$_2$ has been used. 
\begin{equation}       \label{3.6.1.2}
\begin{aligned}
\dot R^3(t_0)&= 3R^2(t_0)\sigma_a(\mbox{\boldmath $\Psi$}^*(t_0))\\
&+ 3\int_{0}^{t_0} c^2(\tau,t_0) G({\bf X}(c(\tau,t_0),t_0),{\bf S}(c(\tau,t_0),t_0),\mbox{\boldmath $\Psi$}(c(\tau,t_0),t_0)) \frac{\partial }{\partial \tau} c(\tau,t_0) d\tau.
\end{aligned}
 \end{equation} 

After integrating over $(0,t_0)$, yields:
\begin{equation}       \label{3.7.1}
\begin{aligned}
R^3(t_0) &= 3\int_{0}^{t_0}R^2(\theta) \sigma_a(\mbox{\boldmath $\Psi$}^*(\theta)) d\theta \\
 &+3 \int_{0}^{t_0}  d\theta \int_{0}^{\theta} c^2(\tau,\theta) G({\bf X}(c(\tau,\theta),\theta),{\bf S}(c(\tau,\theta),\theta),\mbox{\boldmath $\Psi$}(c(\tau,\theta),\theta)) \frac{\partial }{\partial \tau} c(\tau,\theta) d\tau,
 \end{aligned}
\end{equation}

\begin{equation}       \label{3.7.1.2}
\begin{aligned}
 R(t_0) &= \bigg( 3\int_{0}^{t_0}R^2(\theta) \sigma_a(\mbox{\boldmath $\Psi$}^*(\theta)) d\theta \\
 &+3 \int_{0}^{t_0}  d\theta \int_{0}^{\theta} c^2(\tau,\theta) G({\bf X}(c(\tau,\theta),\theta),{\bf S}(c(\tau,\theta),\theta),\mbox{\boldmath $\Psi$}(c(\tau,\theta),\theta))  \frac{\partial }{\partial \tau} c(\tau,\theta) d\tau \bigg)^{\frac{1}{3}}.
\end{aligned}
\end{equation}

Equation \eqref{3.7.1.2} is the desired integral equation for $R$ in characteristic coordinates. 

Similarly, the integral form of Eq. \eqref{3.8} can be written as follows:
\begin{equation}        \label{3.8.1}
c^2(t_0,t)\frac{\partial }{\partial t} c(t_0,t)= \int_{0}^{t_0} c^2(\tau,t) G({\bf X}(c(\tau,t),t),{\bf S}(c(\tau,t),t),\mbox{\boldmath $\Psi$}(c(\tau,t),t))\frac{\partial c}{\partial \tau} (\tau,t)  d\tau,
\end{equation}

\begin{equation}        \label{3.8.1.2}
\frac{\partial }{\partial t} c^3(t_0,t)= 3 \int_{0}^{t_0} c^2(\tau,t) G({\bf X}(c(\tau,t),t),{\bf S}(c(\tau,t),t),\mbox{\boldmath $\Psi$}(c(\tau,t),t)) \frac{\partial c}{\partial \tau} (\tau,t) d\tau.
\end{equation}
 
Integrating over $(t_0,t)$:
\begin{equation}         \label{3.9.1}
\begin{aligned}
&c^3(t_0,t)- c^3(t_0,t_0) \\
&= 3\int_{t_0}^{t} d\theta \int_{0}^{t_0} c^2(\tau,\theta) G({\bf X}(c(\tau,\theta),\theta),{\bf S}(c(\tau,\theta),\theta),\mbox{\boldmath $\Psi$}(c(\tau,\theta),\theta)) \frac{\partial }{\partial \tau} c(\tau,\theta) d\tau,
\end{aligned}
\end{equation}
 
\begin{equation}         \label{3.10.1}
\begin{aligned}
c^3(t_0,t) &= R^3(t_0)\\
&+ 3\int_{t_0}^{t} d\theta \int_{0}^{t_0} c^2(\tau,\theta) G({\bf X}(c(\tau,\theta),\theta),{\bf S}(c(\tau,\theta),\theta),\mbox{\boldmath $\Psi$}(c(\tau,\theta),\theta)) \frac{\partial }{\partial \tau} c(\tau,\theta) d\tau.
 \end{aligned}
\end{equation}

Lastly, substituting expression \eqref{3.7.1} of $R^3(t_0)$ in Eq. \eqref{3.10.1} yields:
\begin{equation}         \label{3.11.1}
\begin{aligned}
c^3(t_0,t) &= 3\int_{0}^{t_0} R^2(\theta)\sigma_a(\mbox{\boldmath $\Psi$}^*(\theta))d\theta \\
&+3\int_{0}^{t_0} d\theta \int_{0}^{\theta} c^2(\tau,\theta) G({\bf X}(c(\tau,\theta),\theta),{\bf S}(c(\tau,\theta),\theta),\mbox{\boldmath $\Psi$}(c(\tau,\theta),\theta)) \frac{\partial c}{\partial \tau} (\tau,\theta) d\tau \\
&+3 \int_{t_0}^{t} d\theta \int_{0}^{t_0} c^2(\tau,\theta)  G({\bf X}(c(\tau,\theta),\theta),{\bf S}(c(\tau,\theta),\theta),\mbox{\boldmath $\Psi$}(c(\tau,\theta),\theta)) \frac{\partial c}{\partial \tau} (\tau,\theta) d\tau,
\end{aligned}
\end{equation}

\begin{equation}         \label{3.11.1.2}
\begin{aligned}
c(t_0,t) &= \bigg( 3\int_{0}^{t_0} R^2(\theta)\sigma_a(\mbox{\boldmath $\Psi$}^*(\theta))d\theta \\
&+ 3\int_{0}^{t_0} d\theta \int_{0}^{\theta} c^2(\tau,\theta) G({\bf X}(c(\tau,\theta),\theta),{\bf S}(c(\tau,\theta),\theta),\mbox{\boldmath $\Psi$}(c(\tau,\theta),\theta)) \frac{\partial c}{\partial \tau} (\tau,\theta) d\tau \\
&+3 \int_{t_0}^{t} d\theta \int_{0}^{t_0} c^2(\tau,\theta)  G({\bf X}(c(\tau,\theta),\theta),{\bf S}(c(\tau,\theta),\theta),\mbox{\boldmath $\Psi$}(c(\tau,\theta),\theta)) \frac{\partial c}{\partial \tau} (\tau,\theta) d\tau \bigg)^{\frac{1}{3}}.
\end{aligned}
\end{equation}
   
Equation \eqref{3.11.1.2} is the desired integral equation for the characteristics. 

In addition, the partial derivative of $c^3(t_0,t)$ with respect to $t_0$ satisfies the following integral equation:
\begin{equation}         \label{3.12.1}
\begin{aligned}
\frac{\partial }{\partial t_0} c^3(t_0,t) &= 3 R^2(t_0)\sigma_a(\mbox{\boldmath $\Psi$}^*(t_0))\\
&+3 \int_{t_0}^{t} c^2(t_0,\theta)G({\bf X}(c(t_0,\theta),\theta),{\bf S}(c(t_0,\theta),\theta),\mbox{\boldmath $\Psi$}(c(t_0,\theta),\theta)) \frac{\partial }{\partial t_0} c(t_0,\theta) d\theta,
\end{aligned}
 \end{equation}
 
 \begin{equation}         \label{3.12.1.2}
\begin{aligned}
\frac{\partial }{\partial t_0} c(t_0,t) &= \frac{1}{c^2(t_0,t)} R^2(t_0)\sigma_a(\mbox{\boldmath $\Psi$}^*(t_0))\\
&+ \frac{1}{c^2(t_0,t)}\int_{t_0}^{t} c^2(t_0,\theta)G({\bf X}(c(t_0,\theta),\theta),{\bf S}(c(t_0,\theta),\theta),\mbox{\boldmath $\Psi$}(c(t_0,\theta),\theta)) \frac{\partial }{\partial t_0} c(t_0,\theta) d\theta.
\end{aligned}
 \end{equation}
 
Eq. \eqref{3.12.1.2} is needed, as $\partial c$/$\partial t_0$ appears in the integral equations \eqref{3.7.1.2} - \eqref{3.11.1.2}.

\section{Uniqueness and existence theorem} \label{n4}
\
An existence and uniqueness result for the integral system \eqref{3.7}, \eqref{3.11}, \eqref{3.12}, \eqref{3.17}, \eqref{3.23} and \eqref{3.29} is derived in this section under the hypothesis $\sigma_d = 0$. Note that the free boundary coincides with the initial point of the characteristic line $r = c(t_0, t)$, whose evolution is governed by equation \eqref{3.11}.

By using the following positions:
\begin{equation}         \label{4.3.1}
{\bf x}(t_0,t)={\bf X}(c(t_0,t),t),\ \ {\bf x}(x_1,...,x_n),
\end{equation}
 
\begin{equation}         \label{4.3.2}
{\bf s}(t_0,t)={\bf S}(c(t_0,t),t),\ \ {\bf s}(s_1,...,s_m),
\end{equation}
 
\begin{equation}         \label{4.3.3}
\mbox{\boldmath $\psi$}(t_0,t)=\mbox{\boldmath $\Psi$}(c(t_0,t),t),\ \ \mbox{\boldmath $\psi$}(\psi_1,...,\psi_n),
\end{equation}
 
\begin{equation}         \label{4.3.4}
\begin{aligned}
&F_{c,1}({\bf x}(\tau,\theta),{\bf s}(\tau,\theta),\mbox{\boldmath $\psi$}(\tau,\theta),c(\theta,\theta),c(\tau,\theta),\frac{\partial c}{\partial \tau}(\tau,\theta))\\
&= \frac{1}{c^2(\theta,\theta)} c^2(\tau,\theta) G({\bf x}(\tau,\theta),{\bf s}(\tau,\theta),\mbox{\boldmath $\psi$}(\tau,\theta))\frac{\partial c}{\partial \tau}(\tau,\theta),
\end{aligned}
\end{equation}

\begin{equation}         \label{4.3.5}
\begin{aligned}
&F_{c,2}({\bf x}(\tau,\theta),{\bf s}(\tau,\theta),\mbox{\boldmath $\psi$}(\tau,\theta),c(t_0,\theta),c(\tau,\theta),\!\frac{\partial c}{\partial \tau}(\tau,\theta))\\
&=  \frac{1}{c^2(t_0,\theta)} c^2(\tau,\theta) G({\bf x}(\tau,\theta),{\bf s}(\tau,\theta),\mbox{\boldmath $\psi$}(\tau,\theta))\frac{\partial c}{\partial \tau}(\tau,\theta),
\end{aligned}
\end{equation}

\begin{equation}         \label{4.3.6}
\begin{aligned}
&F_{c,3}({\bf x}(t_0,\theta),{\bf s}(t_0,\theta),\mbox{\boldmath $\psi$}(t_0,\theta),
\frac{\partial c}{\partial t_0}(t_0,\theta))\\
&=G({\bf x}(t_0,\theta),{\bf s}(t_0,\theta),\mbox{\boldmath $\psi$}(t_0,\theta))
\frac{\partial c}{\partial t_0}(t_0,\theta),
\end{aligned}
\end{equation}

\begin{equation}         \label{4.3.7}                                  
\begin{aligned}
&F_{s,j}({\bf x}(\tau,t),{\bf s}(\tau,t),c(\theta,t),c(\tau,t),\!
\frac{\partial c}{\partial \theta}(\theta,t),
\frac{\partial c}{\partial \tau}(\tau,t))\! \\
=&D_{S,j}^{-1} \frac{1}{c^2(\theta,t)} c^2(\tau,t) r_{S,j}({\bf x}(\tau,t),{\bf s}(\tau,t))
\frac{\partial c}{\partial \theta}(\theta,t)
\frac{\partial c}{\partial \tau}(\tau,t),
\end{aligned}          
\end{equation}

\begin{equation}         \label{4.3.8}                            
\begin{aligned}
&F_{\psi,i}(\mbox{\boldmath $\psi$}(\tau,t),{\bf s}(\tau,t),c(\theta,t),c(\tau,t),\!
\frac{\partial c}{\partial \theta}(\theta,t),\!
\frac{\partial c}{\partial \tau}(\tau,t))
\! \\
=D&_{\psi,i}^{-1} \frac{1}{c^2(\theta,t)} c^2(\tau,t)
r_{\psi,i}(\mbox{\boldmath $\psi$}(\tau,t),{\bf s}(\tau,t))
\frac{\partial c}{\partial \theta}(\theta,t)
\frac{\partial c}{\partial \tau}(\tau,t),     
\end{aligned}
\end{equation}


\begin{equation}           \label{4.3.10}
\Sigma(t_0)=\int_{0}^{t_0}\sigma_{a}(\mbox{\boldmath $\Psi$}^*(\theta))
d\theta,
\end{equation}  
the integral system \eqref{3.11}, \eqref{3.12}, \eqref{3.17}, \eqref{3.23} and \eqref{3.29} is converted into the following more compact equations:
\begin{equation}           \label{4.3.11}
\begin{aligned}
c(t_0,t) &= \Sigma(t_0) \\
&+  \int_{0}^{t_0} d\theta \int_{0}^{\theta} F_{c,1}({\bf x}(\tau,\theta),{\bf s}(\tau,\theta),\mbox{\boldmath $\psi$}(\tau,\theta),c(\theta,\theta),c(\tau,\theta),
\frac{\partial c}{\partial \tau}(\tau,\theta)) d\tau \\
&+  \int_{t_0}^{t} d\theta \int_{0}^{t_0}  F_{c,2}({\bf x}(\tau,\theta),{\bf s}(\tau,\theta),\mbox{\boldmath $\psi$}(\tau,\theta),c(t_0,\theta),c(\tau,\theta),
\frac{\partial c}{\partial \tau}(\tau,\theta)) d\tau, \ 0< t_0< t\leq T,
\end{aligned}
\end{equation}

\begin{equation}            \label{4.3.12}
\begin{aligned}
\frac{\partial c}{\partial t_0}(t_0,t)
=\sigma_{a}(\mbox{\boldmath $\Psi$}^*(t_0))+\int_{t_0}^{t} F_{c,3}({\bf x}(t_0,\theta),{\bf s}(t_0,\theta),\mbox{\boldmath $\psi$}(t_0,\theta),
\frac{\partial c}{\partial t_0}(t_0,\theta))
d\theta,\ \ 0< t_0< t\leq T,
\end{aligned}
\end{equation}

\begin{equation}            \label{4.3.13}
\begin{aligned}
x_i(t_0,t)=X_{i,0}(t_0)+\int_{t_0}^t F_i({\bf x}(t_0,\tau),{\bf s}(t_0,\tau),\mbox{\boldmath $\psi$}(t_0,\tau))d\tau,
\ 0\leq t_0< t\leq T, \ i=1,...,n,
\end{aligned}
\end{equation}

\begin{equation}            \label{4.3.14}
\begin{aligned}
&s_j(t_0,t)= S_j^*(t)+ \int_{t_0}^{t}d\theta\int_{0}^{\theta}F_{s,j}({\bf x}(\tau,t),{\bf s}(\tau,t),c(\theta,t),c(\tau,t),
\frac{\partial c}{\partial \theta}(\theta,t),
\frac{\partial c}{\partial \tau}(\tau,t))d\tau,\\
& \ 0< t_0< t\leq T, \ j=1,...,m,
\end{aligned}
\end{equation}

\begin{equation}            \label{4.3.15}
\begin{aligned}
&\psi_i(t_0,t)=\Psi_i^*(t)
+ \int_{t_0}^{t}d\theta\int_{0}^{\theta}F_{\psi,i}(\mbox{\boldmath $\psi$}(\tau,t),{\bf s}(\tau,t),c(\theta,t),c(\tau,t),\frac{\partial c}{\partial \theta}(\theta,t),
\frac{\partial c}{\partial \tau}(\tau,t))d\tau,\\
&\ \ 0< t_0< t\leq T, \ i=1,...,n.
\end{aligned}
\end{equation}

 
An existence and uniqueness theorem for the integral problem \eqref{4.3.11}-\eqref{4.3.15} can be proved in the space of the continuous functions as generalization of the results in D'Acunto et al. \cite{d2019free, d2021free}.

\begin{thm}\label{thm}
Suppose that:

(a) $x_{i}^{}(t_0^{},t),s_{j}^{}(t_0^{},t),\psi_{i}^{}(t_0^{},t),c(t_0^{},t),c_{t_0}^{}(t_0^{},t)
 \in C^0([0,\ T_1]\times[0,\  T_1])$,
 \ $T_1>0$, \ $i=1,...,n$, \ $j=1,...,m$
 ;

 (b) $X_{i,0}(t_0^{}), \sigma_{a}(\mbox{\boldmath $\Psi^*$}(t_0)), S_j^*(t), \Psi_i^*(t)
 \in C^0([0,\ T_1]),
 \ i=1,...,n, \ j=1,...,m$;

 (c) $|x_{i}^{}-X_{i,0}|\leq h_{x,i}, \ i=1,...,n$;
 $|s_{j}^{}-S_j^*|\leq h_{s,j}, \ j=1,...,m$;
 $|\psi_{i}^{}-\Psi_i^*|\leq h_{\psi,i}, \ i=1,...,n$;
  $|c-\Sigma|\leq h_{c,1}$;
 $|c_{t_0}^{}-\sigma_a|\leq h_{c,2}$,
 where $h_{x,i},h_{s,j},h_{\psi,i},$
 $h_{c,1},h_{c,2}$ are positive constants;

 (d) $F_i, i=1,...,n$, $F_{s,j}, j=1,...,m$, $F_{\psi,i}, i=1,...,n$, 
 $F_{c,1},F_{c,2},F_{c,3}$ are bounded and Lipschitz continuous with respect to their arguments
\[
 M_i=\max |F_i|,\ i=1,...,n,\ M_{s,j}=\max |F_{s,j}|,\ j=1,...,m,
\]
\[
 M_{\psi,i}=\max |F_{\psi,i}|, \ i=1,...,n, \ M_{c,1}=\max(|F_{c,1}|,|F_{c,2}|), \ M_{c,2}=\max |F_{c,3}|,
\]
 
\[
 |F_i({\bf x},{\bf s},\mbox{\boldmath $\psi$})-F_i(\tilde{\bf x},\tilde{\bf s},\tilde{\mbox{\boldmath $\psi$}})|\leq
 \lambda_i\left[\sum_{k=1}^{n}|x_{k}^{}-\tilde x_{k}^{}|
 +\sum_{k=1}^{m}|s_{k}^{}-\tilde s_{k}^{}|+\sum_{k=1}^{n}|\psi_{k}^{}-\tilde \psi_{k}^{}|\right],
 \ i=1,...n,
 \]
 \[
 \begin{aligned}
 &|F_{s,j}({\bf x},{\bf s},c,c_{t_0}^{})-
 F_{s,j}(\tilde{\bf x},\tilde{\bf s},\tilde{c},\tilde c_{t_0}^{})|
 \leq 
 \lambda_{s,j}\left[\sum_{k=1}^{n}|x_{k}^{}-\tilde x_{k}^{}|
 +\sum_{k=1}^{m}|s_{k}^{}-\tilde s_{k}^{}|+|c-\tilde c|+|c_{t_0}^{}-\tilde c_{t_0}^{}| \right],\\
 & \ j=1,...m,
 \end{aligned}
 \]
\[
\begin{aligned} 
 &|F_{\psi,i}(\mbox{\boldmath $\psi$},{\bf s},c,c_{t_0}^{})-
 F_{\psi,i}(\tilde{\mbox{\boldmath $\psi$}},\tilde{\bf s},\tilde{c},\tilde c_{t_0}^{})|
 \leq
 \lambda_{\psi,i}\left[\sum_{k=1}^{n}|\psi_{k}^{}-\tilde \psi_{k}^{}|
 \!+\!\sum_{k=1}^{m}|s_{k}^{}-\tilde s_{k}^{}|\!+\!|c-\tilde c|+|c_{t_0}^{}-\tilde c_{t_0}^{}|
 \right], \\
 & \ i=1,...,n,
 \end{aligned}
 \]
 \[
 \begin{aligned}
 |F_{c,1}({\bf x},{\bf s},\mbox{\boldmath $\psi$},c,c_{t_0}^{})-F&_{c,1}(\tilde{\bf x},\tilde{\bf s},\tilde{\mbox{\boldmath $\psi$}},\tilde c,\tilde c_{t_0}^{})|\leq \\
 & \lambda_{c,1}\left[\!\sum_{k=1}^{n}\!|x_{k}^{}-\tilde x_{k}^{}|
 +\!\sum_{k=1}^{m}\!|s_{k}^{}-\tilde s_{k}^{}|+\!\sum_{k=1}^{n}\!|\psi_{k}^{}-\tilde \psi_{k}^{}|+|c-\tilde c|+|c_{t_0}^{}-\tilde c_{t_0}^{}|
 \right],
 \end{aligned}
 \]
 \[
 \begin{aligned}
 |F_{c,2}({\bf x},{\bf s},\mbox{\boldmath $\psi$},c,c_{t_0}^{})-F&_{c,2}(\tilde{\bf x},\tilde{\bf s},\tilde{\mbox{\boldmath $\psi$}},\tilde c,\tilde c_{t_0}^{})|\leq\\
& \lambda_{c,2}\left[\!\sum_{k=1}^{n}\!|x_{k}^{}-\tilde x_{k}^{}|
 \!+\sum_{k=1}^{m}\!|s_{k}^{}-\tilde s_{k}^{}|+\!\sum_{k=1}^{n}\!|\psi_{k}^{}-\tilde \psi_{k}^{}|+|c-\tilde c|+|c_{t_0}^{}-\tilde c_{t_0}^{}|
 \right],
\end{aligned} 
 \]
 \[
 \begin{aligned}
 |F_{c,3}({\bf x},{\bf s},\mbox{\boldmath $\psi$},c_{t_0}^{})-F_{c,2}(\tilde{\bf x},\tilde{\bf s},\tilde{\mbox{\boldmath $\psi$}}&,\tilde c_{t_0}^{})|\leq \\
& \lambda_{c,3}\left[\sum_{k=1}^{n}|x_{k}^{}-\tilde x_{k}^{}|
 +\sum_{k=1}^{m}|s_{k}^{}-\tilde s_{k}^{}|+\sum_{k=1}^{n}|\psi_{k}^{}-\tilde \psi_{k}^{}|+|c_{t_0}^{}-\tilde c_{t_0}^{}|
 \right],
\end{aligned}
 \]
 {\it when}
 $(t_0^{},t)\in[0,\  T_1]\times[0,\  T_1]$ and the functions
 $x_{i}^{}$, $s_{j}^{}$, $\psi_{i}^{}$, 
  $c$, $c_{t_0}^{}$ satisfy the assumptions (a)-(c).

 Then, integral system \eqref{4.3.11}-\eqref{4.3.15}
 has a unique solution $x_{i}^{}$, $s_{j}^{}$, $\psi_{i}^{}$, 
  $c$, $c_{t_0}^{}$,
 $\in C^0([0,\ T]\times[0,\  T])$,
 where
 \[
 T=\min\left\{T_1,\frac{h_{x,1}}{M_1},...,\frac{h_{x,n}}{M_n},
 \sqrt{\frac{h_{s,1}}{M_{s,1}}},...,\sqrt{\frac{h_{s,m}}{M_{s,m}}},
 \sqrt{\frac{h_{\psi,1}}{M_{\psi,1}}},...,\sqrt{\frac{h_{\psi,n}}{M_{\psi,n}}},  \sqrt{\frac{h_{c,1}}{2M_{c,1}}},
 \frac{h_{c,2}}{M_{c,2}}
 \right\}.
 \]

Moreover, $T$ satisfies the following condition,
 \begin{equation}                                        \label{4.3.16}
  aT^2+bT<1,
 \end{equation}
 where
 \begin{equation}                                        \label{4.3.17}
  a=\sum_{j=1}^{m}\lambda_{s,j}+\sum_{i=1}^{n}\lambda_{\psi,i}+\lambda_{c,1}+\lambda_{c,2},\
  b=\sum_{i=1}^{n}\lambda_{i}+\lambda_{c,3}.
 \end{equation}
\end{thm}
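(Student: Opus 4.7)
The plan is to apply Banach's contraction mapping theorem to the coupled integral system \eqref{4.3.11}--\eqref{4.3.15}, viewed as a fixed-point equation $\mathbf{w}=\mathcal{T}\mathbf{w}$ for the tuple $\mathbf{w}:=(x_1,\dots,x_n,s_1,\dots,s_m,\psi_1,\dots,\psi_n,c,c_{t_0})$ in a closed subset of $\mathcal{C}:=[C^0([0,T]\times[0,T])]^{2n+m+2}$ endowed with the sum of componentwise sup-norms $\|\cdot\|$. The operator $\mathcal{T}$ is defined componentwise by the right-hand sides of \eqref{4.3.11}--\eqref{4.3.15}; continuity of $\mathcal{T}\mathbf{w}$ on $[0,T]\times[0,T]$ follows from standard parameter-dependence results for integrals of continuous functions, so $\mathcal{T}:\mathcal{C}\to\mathcal{C}$ is well defined.

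First I would single out the invariant closed ball
\[
\mathcal{B}_T:=\bigl\{\mathbf{w}\in\mathcal{C}:|x_i-X_{i,0}|\le h_{x,i},\ |s_j-S^*_j|\le h_{s,j},\ |\psi_i-\Psi^*_i|\le h_{\psi,i},\ |c-\Sigma|\le h_{c,1},\ |c_{t_0}-\sigma_a|\le h_{c,2}\bigr\},
\]
which is a complete metric subspace of $\mathcal{C}$. Using the uniform bounds $M_i,M_{s,j},M_{\psi,i},M_{c,1},M_{c,2}$ of hypothesis (d), the simple time integrals in \eqref{4.3.13} and \eqref{4.3.12} give $|(\mathcal{T}\mathbf{w})_{x_i}-X_{i,0}|\le M_iT$ and $|(\mathcal{T}\mathbf{w})_{c_{t_0}}-\sigma_a|\le M_{c,2}T$, while the iterated double integrals in \eqref{4.3.14}, \eqref{4.3.15}, \eqref{4.3.11} are dominated by $M_{s,j}T^2$, $M_{\psi,i}T^2$ and $2M_{c,1}T^2$ respectively. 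Demanding each of these bounds to stay below the corresponding $h$ is exactly the list of conditions defining $T$ in the statement, so $\mathcal{T}(\mathcal{B}_T)\subset\mathcal{B}_T$.

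Next, for two tuples $\mathbf{w},\tilde{\mathbf{w}}\in\mathcal{B}_T$, I would invoke the Lipschitz hypotheses on $F_i,F_{s,j},F_{\psi,i},F_{c,1},F_{c,2},F_{c,3}$ to estimate each componentwise difference. Since the sum of differences appearing on the right-hand sides of the Lipschitz bounds is controlled by $\|\mathbf{w}-\tilde{\mathbf{w}}\|$, the single-time integrals contribute terms linear in $T$: summing over $i=1,\dots,n$ the $x_i$-component produces $\bigl(\sum_i\lambda_i\bigr)T$, and the $c_{t_0}$-component produces $\lambda_{c,3}T$. The iterated double integrals contribute terms quadratic in $T$: $\bigl(\sum_j\lambda_{s,j}\bigr)T^2$ from \eqref{4.3.14}, $\bigl(\sum_i\lambda_{\psi,i}\bigr)T^2$ from \eqref{4.3.15}, and $(\lambda_{c,1}+\lambda_{c,2})T^2$ from \eqref{4.3.11}. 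Assembling these estimates in the sum-norm yields
\[
\|\mathcal{T}\mathbf{w}-\mathcal{T}\tilde{\mathbf{w}}\|\le (aT^2+bT)\,\|\mathbf{w}-\tilde{\mathbf{w}}\|,
\]
with $a,b$ as in \eqref{4.3.17}. The smallness condition \eqref{4.3.16} then makes $\mathcal{T}$ a strict contraction on $\mathcal{B}_T$, and Banach's fixed point theorem delivers a unique fixed point, i.e.\ the sought solution of the integral system in $C^0([0,T]\times[0,T])$.

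The main obstacle is the intertwined appearance of the characteristic $c$ and of its derivative $c_{t_0}$ in the integrands of $F_{c,1},F_{c,2},F_{s,j},F_{\psi,i}$: these enter both through the nonlinearities $G,r_{S,j},r_{\Psi,i}$ evaluated along characteristics and through the explicit geometric factors $c^{\pm 2}$ and $\partial c/\partial\tau$, so a direct verification of global Lipschitz and boundedness estimates would be delicate and would require monitoring the strict positivity of $c$ away from the singular corner $t_0=t=0$. Hypothesis (d) is precisely the device that packages all these dependencies on the closed set $\mathcal{B}_T$ into the single uniform constants $M_{(\cdot)}$ and $\lambda_{(\cdot)}$; once those hypotheses are granted, the fixed-point argument unfolds as a multispecies generalization of the schemes already developed in D'Acunto et al.\ \cite{d2019free,d2021free}.
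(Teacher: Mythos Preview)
Your proposal is correct and follows essentially the same approach as the paper: define the fixed-point operator from the right-hand sides of \eqref{4.3.11}--\eqref{4.3.15} on the closed set cut out by the constraints in (c), use the bounds $M_{(\cdot)}$ to obtain self-mapping with exactly the thresholds defining $T$, and use the Lipschitz constants $\lambda_{(\cdot)}$ to obtain the contraction factor $aT^2+bT$. Your closing remark about hypothesis (d) absorbing the delicate dependence on $c^{\pm2}$ and $\partial c/\partial\tau$ is a fair commentary that the paper leaves implicit.
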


 \begin{proof}
 Denote by $\Omega$ the space of continuous functions
 $x_{i}(t_0^{},t)$, $s_{j}(t_0^{},t)$, $\psi_{i}(t_0^{},t)$, 
 $c(t_0^{},t)$,
 $c_{t_0}^{}(t_0^{},t)$,\ $t_0^{}\in[0,\ T]$,\ $t\in[0,\ T]$, and endow it with the uniform norm

\[
 ||({\bf x},{\bf s},\mbox{\boldmath $\psi$},c,c_{t_0}^{})||=
\sum_{i=1}^{n}\max_{\Omega}|x_i^{}|
 +\sum_{j=1}^{m}\max_{\Omega}|s_j^{}|+\sum_{i=1}^{n}\max_{\Omega}|\psi_i^{}|
  +
 \max_{\Omega}|c|+\max_{\Omega}|c_{t_0}^{}|.
 \]
 Consider the map
 $({\bf x}^*,{\bf s}^*,\mbox{\boldmath ${\psi}^*$},c^{*},c_{t_0}^{*})=A({\bf x},{\bf s},\mbox{\boldmath $\psi$},c,c_{t_0}^{})$,
 where $({\bf x}^*,{\bf s}^*,\mbox{\boldmath ${\psi}^*$},c^{*},c_{t_0}^{*})=$
 RHS of equations \eqref{4.3.11}-\eqref{4.3.15}.
 Let us prove that $A$ maps $\Omega$ into itself. Indeed,
 \[
 |x_{i}^*-X_{i,0}|\leq M_iT\leq h_{x,i},\ \ i=1,...,n,
 \]
 \[
 |s_{j}^*-S_{j}^*|\leq M_{s,j}T^2\leq h_{s,j}, \ \ 
 |{\psi}_{i}^*-\Psi_i^*|\leq M_{\psi,i}T^2\leq h_{\psi,i},\ \
 \ i=1,...,n, \ j=1,...,m,
 \]
 \[
  \ |c^{*}-\Sigma|\leq 2M_{c,1}T^2 \leq h_{c,1},
  \ |c_{t_0}^{*}-\sigma_a|\leq M_{c,2}T\leq h_{c,2}.
 \]
\noindent
 Consider $(\tilde{\bf x},\tilde{\bf s},\tilde{\mbox{\boldmath $\psi$}},  \tilde c, \tilde c_{t_0}^{})\in\Omega \ $ 
  and let
 $(\tilde{\bf x}^*,\tilde{\bf s}^*, \tilde{\mbox{\boldmath $\psi$}}^*, \tilde c^*, \tilde c_{t_0}^{*})
 =A(\tilde{\bf x},\tilde{\bf s},\tilde{\mbox{\boldmath $\psi$}}, \tilde c, \tilde c_{t_0}^{})$.
 It is possible to obtain
 \[
 |x_{i}^{*}-\tilde x_{i}^{*}|\leq \lambda_i T
 ||({\bf x},{\bf s},\mbox{\boldmath $\psi$}, c, c_{t_0}^{})-
 (\tilde{\bf x},\tilde{\bf s},\tilde{\mbox{\boldmath $\psi$}},\tilde c, \tilde c_{t_0}^{})||,\ i=1,...,n,
 \]
 \[
 |s_{j}^{*}-\tilde s_{j}^{*}|\leq
 \lambda_{s,j}T^2||({\bf x},{\bf s},\mbox{\boldmath $\psi$}, c, c_{t_0}^{})-
 (\tilde{\bf x},\tilde{\bf s},\tilde{\mbox{\boldmath $\psi$}}, \tilde c, \tilde c_{t_0}^{})||,
 \ j=1,...,m,
 \]
  \[
 |{\psi}_{i}^{*}-\tilde \psi_{i}^{*}|\leq
 \lambda_{\psi,i}T^2||({\bf x},{\bf s},\mbox{\boldmath $\psi$}, c, c_{t_0}^{})-
 (\tilde{\bf x},\tilde{\bf s},\tilde{\mbox{\boldmath $\psi$}}, \tilde c, \tilde c_{t_0}^{})||,
 \ i=1,...,n,
 \]
 \[
 |c^{*}-\tilde c^{*}|\leq
 (\lambda_{c,1}+\lambda_{c,2})T^2||({\bf x},{\bf s},\mbox{\boldmath $\psi$}, c, c_{t_0}^{})-
(\tilde{\bf x},\tilde{\bf s},\tilde{\mbox{\boldmath $\psi$}},  \tilde c, \tilde c_{t_0}^{})||,
 \]
 \[
 |c_{t_0}^{*}-\tilde c_{t_0}^{*}|\leq
 \lambda_{c,3}T||({\bf x},{\bf s},\mbox{\boldmath $\psi$},c, c_{t_0}^{})-
 (\tilde{\bf x},\tilde{\bf s},\tilde{\mbox{\boldmath $\psi$}}, \tilde c, \tilde c_{t_0}^{})||.
 \]
\noindent
 Therefore,
 \[
 ||({\bf x}^{*},{\bf s}^{*}, \mbox{\boldmath ${\psi}^*$}, c^{*}, c_{t_0}^{*})-
 (\tilde{\bf x}^{*},\tilde{\bf s}^{*}, \tilde{\mbox{\boldmath $\psi$}}^*, \tilde c^{*}, \tilde c_{t_0}^{*})||
 \leq  
 \Lambda
 ||({\bf x},{\bf s},\mbox{\boldmath $\psi$}, c, c_{t_0}^{})-
 (\tilde{\bf x},\tilde{\bf s},\tilde{\mbox{\boldmath $\psi$}}, \tilde c, \tilde c_{t_0}^{})||,
 \]
 where
 \[
 \Lambda=aT^2+bT.
 \]
 According to \eqref{4.3.16} $\Lambda<1$, proving Theorem \ref{thm}.
\end{proof}

\begin{rmk}\label{rmk2}
The mathematical model \eqref{2.3.1} -\eqref{2.3.10} describes the initial phase of biofilm formation, when the detachment flux $\sigma_d$ is negligible compared to the attachment flux $\sigma_a$. When $\sigma_d$ is not negligible and the net exchange flux at the interface biofilm-bulk liquid is positive ($\sigma_a-\sigma_d>0$), Eq. \eqref{2.3.3} and its initial condition \eqref{2.3.4} can be rewritten as follows:
\begin{equation}              \label{4.1}
\dot R(t) = u(R(t),t) + \sigma_a(\mbox{\boldmath $\Psi$}^*)- \sigma_d(R(t)), \ \sigma_a-\sigma_d>0, \ t>0, \ R(0)=0.
\end{equation}

Consider Eq. \eqref{4.1}$_1$ written for $t = t_0$: 
\begin{equation}              \label{4.3}
\dot R(t_0)= \sigma_a(\mbox{\boldmath $\Psi$}^*(t_0))-  \sigma_d(R(t_0))+ u(R(t_0),t_0).
\end{equation}

Substituting Eq. \eqref{3.4} in Eq. \eqref{4.3} it follows:
\begin{equation}              \label{4.4}
\begin{aligned}
\dot R(t_0)&= \sigma_a(\mbox{\boldmath $\Psi$}^*(t_0))- \sigma_d(c(t_0,t_0))\\
&+ \frac{1}{c^2(t_0,t_0)}\int_{0}^{t_0} c^2(\tau,t_0) G({\bf X}(c(\tau,t_0),t_0),{\bf S}(c(\tau,t_0),t_0),\mbox{\boldmath $\Psi$}(c(\tau,t_0),t_0)) \frac{\partial }{\partial \tau} c(\tau,t_0) d\tau.
\end{aligned}
 \end{equation}

Integrating over $(0,t_0)$ it follows:
\begin{equation}              \label{4.5}
\begin{aligned}
 R(t_0) &= \int_{0}^{t_0} [\sigma_a(\mbox{\boldmath $\Psi$}^*(\theta))- \sigma_d(c(\theta,\theta))] d\theta \\
 &+ \int_{0}^{t_0} \frac{1}{c^2(\theta,\theta)} d\theta \int_{0}^{\theta} c^2(\tau,\theta) G({\bf X}(c(\tau,\theta),\theta),{\bf S}(c(\tau,\theta),\theta),\mbox{\boldmath $\Psi$}(c(\tau,\theta),\theta))  \frac{\partial }{\partial \tau} c(\tau,\theta) d\tau,
\end{aligned}
\end{equation}
where the initial condition has been used. Equation \eqref{4.5} is the desired integral equation for $R$ in characteristic coordinates in the case of attachment regime ($\sigma_a-\sigma_d>0$) when $\sigma_d$ is not negligible. 

Substituting the new expression \eqref{4.5} of $R(t_0)$ in Eq. \eqref{3.10} yields:
 \begin{equation}               \label{4.6}
\begin{aligned}
c(t_0,t) &= \int_{0}^{t_0} [\sigma_a(\mbox{\boldmath $\Psi$}^*(\theta))- \sigma_d(c(\theta,\theta))]d\theta \\
&+ \int_{0}^{t_0} \frac{1}{c^2(\theta,\theta)} d\theta \int_{0}^{\theta} c^2(\tau,\theta) G({\bf X}(c(\tau,\theta),\theta),{\bf S}(c(\tau,\theta),\theta),\mbox{\boldmath $\Psi$}(c(\tau,\theta),\theta)) \frac{\partial c}{\partial \tau} (\tau,\theta) d\tau \\
&+ \int_{t_0}^{t} \frac{1}{c^2(t_0,\theta)} d\theta \int_{0}^{t_0} c^2(\tau,\theta)  G({\bf X}(c(\tau,\theta),\theta),{\bf S}(c(\tau,\theta),\theta),\mbox{\boldmath $\Psi$}(c(\tau,\theta),\theta)) \frac{\partial c}{\partial \tau} (\tau,\theta) d\tau.
\end{aligned}
\end{equation}
   
Equation \eqref{4.6} is the desired integral equation for the characteristics in the case of attachment regime ($\sigma_a-\sigma_d>0$) when $\sigma_d$ is not negligible. 

In addition, the integral equation for $\partial c/\partial t_0$ can be obtained by deriving Eq. \eqref{4.6} and reads:
\begin{equation}         \label{4.7}
\begin{aligned}
\frac{\partial }{\partial t_0} c(t_0,t) &= \sigma_a(\mbox{\boldmath $\Psi$}^*(t_0))- \sigma_d(c(t_0,t_0))\\
&+ \int_{t_0}^{t} G({\bf X}(c(t_0,\theta),\theta),{\bf S}(c(t_0,\theta),\theta),\mbox{\boldmath $\Psi$}(c(t_0,\theta),\theta)) \frac{\partial }{\partial t_0} c(t_0,\theta) d\theta.
\end{aligned}
 \end{equation}

An existence and uniqueness result for the integral system \eqref{4.5}, \eqref{4.6}, \eqref{4.7}, \eqref{3.17}, \eqref{3.23} and \eqref{3.29} which models the biofilm dynamics under the hypothesis $\sigma_a-\sigma_d>0$ can be obtained as generalization of the reasonings presented in Section \ref{n4}. 
\end{rmk}

\section{Conclusions} \label{n5}
\
In this work, a free boundary value problem governing the growth of a multispecies granular biofilm is discussed. The presented model accounts for the dynamics of sessile microbial species, dissolved substrates and planktonic cells. It properly reproduces the evolution of a granular biofilm, starting from the initial formation mediated by the attachment process and including the establishment and growth of new species thanks to the invasion phenomena. Modelling the initial phase of granular biofilm formation allows to describe the biofilm growth without arbitrarily fixing the initial size and microbial composition of the granular biofilm.

The work presents for the first time the qualitative analysis of the spherical free boundary problem for the initial phase of multispecies granular biofilm growth. The biological context is related to biofilm genesis, where the attachment is the only relevant exchange flux between biofilm and bulk liquid. An existence and uniqueness result has been provided for this spherical free boundary value problem by using the method of characteristics and the fixed point theorem. The existence and uniqueness result has been obtained for an arbitrary number of microbial species $n$ and dissolved substrates $m$, with non linear reaction terms. Moreover, all theorem  hypotheses are not suggested by mathematical artefacts, but they are mostly qualitative and naturally derived from biological considerations. The influence of the environmental conditions on the attachment process represents a key factor in the mathematical modelling of the granular biofilm genesis. This aspect has not been considered in this work, as it requires further investigations. Furthermore, the existence and uniqueness result has been derived in the case of negligible $\sigma_d$ which models the initial phase of biofilms formation. Further work may address the case of detachment regime with $\sigma_a-\sigma_d<0$.

\section*{Acknowledgements}

Fabiana Russo's research activity has been conducted in the context of D.D. n. 155 on 17 May 2018 additional PhD fellowships for 2018/2019 academic year, course XXXIV within the framework of POR Campania FSE 2014-2020 ASSE III - Specific objective 14 Action 10.5.2 - Public notice "Innovative PhD with industrial characterization".

This paper has been performed under the auspices of the G.N.F.M. of I.N.d.A.M.

\section*{Declaration of Interest}

Declarations of interest: none.

\clearpage
\bibliographystyle{unsrt}      
\bibliography{Bibliography}

\begin{thebibliography}{10}

\bibitem{flemming2010biofilm}
Hans-Curt Flemming and Jost Wingender.
\newblock The biofilm matrix.
\newblock {\em Nature reviews microbiology}, 8(9):623--633, 2010.

\bibitem{flemming2016biofilms}
Hans-Curt Flemming, Jost Wingender, Ulrich Szewzyk, Peter Steinberg, Scott~A
  Rice, and Staffan Kjelleberg.
\newblock Biofilms: an emergent form of bacterial life.
\newblock {\em Nature Reviews Microbiology}, 14(9):563--575, 2016.

\bibitem{mattei2018continuum}
MR~Mattei, L~Frunzo, B~D'acunto, Y~Pechaud, F~Pirozzi, and G~Esposito.
\newblock Continuum and discrete approach in modeling biofilm development and
  structure: a review.
\newblock {\em Journal of mathematical biology}, 76(4):945--1003, 2018.

\bibitem{palmer2007bacterial}
Jon Palmer, Steve Flint, and John Brooks.
\newblock Bacterial cell attachment, the beginning of a biofilm.
\newblock {\em Journal of Industrial Microbiology and Biotechnology},
  34(9):577--588, 2007.

\bibitem{trego2020growth}
Anna~Christine Trego, Evan Galvin, Conor Sweeney, Sin{\'e}ad Dunning, Cillian
  Murphy, Simon Mills, Corine Nzeteu, Christopher Quince, Stephanie Connelly,
  Umer~Zeeshan Ijaz, et~al.
\newblock Growth and break-up of methanogenic granules suggests mechanisms for
  biofilm and community development.
\newblock {\em Frontiers in microbiology}, 11:1126, 2020.

\bibitem{baeten2019modelling}
Janis~E Baeten, Damien~J Batstone, Oliver~J Schraa, Mark~CM van Loosdrecht, and
  Eveline~IP Volcke.
\newblock Modelling anaerobic, aerobic and partial nitritation-anammox granular
  sludge reactors-a review.
\newblock {\em Water Research}, 149:322--341, 2019.

\bibitem{eberl2001new}
Herman~J Eberl, David~F Parker, and Mark~CM Vanloosdrecht.
\newblock A new deterministic spatio-temporal continuum model for biofilm
  development.
\newblock {\em Computational and Mathematical Methods in Medicine},
  3(3):161--175, 2001.

\bibitem{emerenini2015mathematical}
Blessing~O Emerenini, Burkhard~A Hense, Christina Kuttler, and Hermann~J Eberl.
\newblock A mathematical model of quorum sensing induced biofilm detachment.
\newblock {\em PloS one}, 10(7):e0132385, 2015.

\bibitem{alpkvista2007multidimensional}
Erik Alpkvista and Isaac Klapper.
\newblock A multidimensional multispecies continuum model for heterogeneous
  biofilm development.
\newblock {\em Bulletin of mathematical biology}, 69(2):765--789, 2007.

\bibitem{nat1}
Fabrizio Clarelli, Cristiana Di~Russo, Roberto Natalini, and Magali Ribot.
\newblock A fluid dynamics model of the growth of phototrophic biofilms.
\newblock {\em Journal of mathematical biology}, 66(7):1387--1408, 2013.

\bibitem{nat2}
Roberta Bianchini and Roberto Natalini.
\newblock Global existence and asymptotic stability of smooth solutions to a
  fluid dynamics model of biofilms in one space dimension.
\newblock {\em Journal of Mathematical Analysis and Applications},
  434(2):1909--1923, 2016.

\bibitem{feldman2017modelling}
Hannah Feldman, Xavier Flores-Alsina, Pedram Ramin, K~Kjellberg, U~Jeppsson,
  Damien~J Batstone, and Krist~V Gernaey.
\newblock Modelling an industrial anaerobic granular reactor using a
  multi-scale approach.
\newblock {\em Water research}, 126:488--500, 2017.

\bibitem{odriozola2016modeling}
Magela Odriozola, Iv{\'a}n L{\'o}pez, and Liliana Borzacconi.
\newblock Modeling granule development and reactor performance on anaerobic
  granular sludge reactors.
\newblock {\em Journal of environmental chemical engineering}, 4(2):1615--1628,
  2016.

\bibitem{volcke2010effect}
EIP Volcke, Cristian Picioreanu, Bernard De~Baets, and MCM Van~Loosdrecht.
\newblock Effect of granule size on autotrophic nitrogen removal in a granular
  sludge reactor.
\newblock {\em Environmental technology}, 31(11):1271--1280, 2010.

\bibitem{d2019free}
B~D'Acunto, L~Frunzo, V~Luongo, and MR~Mattei.
\newblock Free boundary approach for the attachment in the initial phase of
  multispecies biofilm growth.
\newblock {\em Zeitschrift f{\"u}r angewandte Mathematik und Physik},
  70(3):1--16, 2019.

\bibitem{d2021free}
B~D'Acunto, L~Frunzo, V~Luongo, MR~Mattei, and A~Tenore.
\newblock Free boundary problem for the role of planktonic cells in biofilm
  formation and development.
\newblock {\em Zeitschrift f{\"u}r angewandte Mathematik und Physik},
  72(4):1--17, 2021.

\bibitem{tenore2021multiscale}
A~Tenore, F~Russo, MR~Mattei, B~D'Acunto, G~Collins, and L~Frunzo.
\newblock Multiscale modelling of de novo anaerobic granulation.
\newblock {\em Bulletin of Mathematical Biology}, 83, 2021.

\bibitem{d2015modeling}
B~D'Acunto, L~Frunzo, I~Klapper, and MR~Mattei.
\newblock Modeling multispecies biofilms including new bacterial species
  invasion.
\newblock {\em Mathematical biosciences}, 259:20--26, 2015.

\bibitem{wanner1986multispecies}
Oskar Wanner and Willi Gujer.
\newblock A multispecies biofilm model.
\newblock {\em Biotechnology and bioengineering}, 28(3):314--328, 1986.

\bibitem{rahman2015mixed}
Kazi~A Rahman, Rangarajan Sudarsan, and Hermann~J Eberl.
\newblock A mixed-culture biofilm model with cross-diffusion.
\newblock {\em Bulletin of mathematical biology}, 77(11):2086--2124, 2015.

\bibitem{mavsic2014modeling}
Alma Ma{\v{s}}i{\'c} and Hermann~J Eberl.
\newblock A modeling and simulation study of the role of suspended microbial
  populations in nitrification in a biofilm reactor.
\newblock {\em Bulletin of mathematical biology}, 76(1):27--58, 2014.

\bibitem{abbas2012longtime}
Fazal Abbas, Rangarajan Sudarsan, and Hermann~J Eberl.
\newblock Longtime behavior of one-dimensional biofilm models with shear
  dependent detachment rates.
\newblock {\em Mathematical Biosciences \& Engineering}, 9(2):215, 2012.

\bibitem{coclite1}
Giuseppe~Maria Coclite, Mario~M Coclite, and Siddhartha Mishra.
\newblock On a model for the evolution of morphogens in a growing tissue.
\newblock {\em SIAM Journal on Mathematical Analysis}, 48(3):1575--1615, 2016.

\end{thebibliography}

\end{document}